\newtheorem{theorem}{Theorem}[section]
\newtheorem{lemma}[theorem]{Lemma}
\newtheorem{definition}[theorem]{Definition}
\newtheorem{example}[theorem]{Example}
\newenvironment{proof}{{\bf Proof:}}{~\hfill $\Box$}
\newenvironment{keywords}{{\bf Keywords: }}{}
\newenvironment{AMS}{{\bf AMS Subject Classification: }}{}
\numberwithin{equation}{section}
\begin{document}

\title{The Tychonoff uniqueness theorem for the $G$-heat equation
}
\author{LIN Qian  $^{1,2}$\footnote{ Corresponding address: Laboratoire de Math\'ematiques,
Universit\'{e} de Bretagne Occidentale, 6, avenue Victor Le Gorgeu,
CS 93837, 29238 Brest cedex 3, France.   {\it Email address}:
Qian.Lin@univ-brest.fr}
\\
{\small $^1$School of Mathematics, Shandong University, Jinan
250100,    China;}\\
{\small $^2$ Laboratoire de Math\'ematiques, CNRS UMR 6205,
Universit\'{e} de Bretagne Occidentale,}\\ {\small 6, avenue Victor
Le Gorgeu, CS 93837, 29238 Brest cedex 3, France.}  }

\date{}
\maketitle

\begin{abstract}
In this paper  we shall investigate a uniqueness result for
solutions of  the $G$-heat equation. We obtain the Tychonoff
uniqueness theorem for the $G$-heat equation.
\end{abstract}

\noindent
\begin{keywords}
Tychonoff uniqueness theorem; $G$-heat equation; $G$-expectation;
$G$-Brownian motion; Viscosity solution.
\end{keywords}

\vskip 3mm \noindent
\begin{AMS}
    60H05, 60H30, 35K05, 35K55.
\end{AMS}

\section{Introduction}

 Motivated by  the volatility uncertainty  problems, risk  measures  and
superhedging in finance,  Peng has introduced recently  a new notion
of a nonlinear expectation, the so-called $G$-expectation (see
\cite{Peng:2006},  \cite{Peng:2007}, \cite{Peng:2008},
\cite{Peng:2010}),   which is generated by the following  nonlinear
heat equation, called the $G$-heat equation:
\begin{eqnarray*}\label{}
\dfrac{\partial u}{\partial t}=G(D^{2}u),\quad (t, x)\in (0,
T)\times\mathbb{R}^{n},
\end{eqnarray*}
where $D^{2}u$ is the Hessian matrix of $u$, i.e.,
$D^{2}u=(\partial^{2}_{x_{i}x_{j}} u)_{i,j=1}^{n}$ and
\begin{eqnarray}\label{e4}
G(A)=\dfrac{1}{2}\sup\limits_{\alpha\in\Gamma}tr[\alpha\alpha^{T}A],
\quad  A=(A_{ij})_{i,j=1}^{n}\in \mathbb{S}^{n},
\end{eqnarray}
where $\mathbb{S}^{n}$ denotes the space of $n\times n $ symmetric
matrices and $\Gamma$ is a given non-empty, bounded and closed
subset of $\mathbb{R}^{n\times n}$, the space of $n\times n $
matrices.

 Together with the notion of the
$G$-expectation Peng (see \cite{Peng:2006}, \cite{Peng:2008},
\cite{Peng:2010}) introduced the related $G$-normal distribution and
the $G$-Brownian motion,  and established an It\^{o} calculus for
the $G$-Brownian motion. Peng  (see \cite{Peng2007},
\cite{Peng2008-1} and \cite{Peng:2010a}) also obtained the law of
large numbers and central limit theorem under nonlinear
expectations, which indicates that the notion of $G$-normal
distribution plays an important role in the theory of nonlinear
expectations as that of normal distribution in the classical
probability theory. The $G$-expectation can be regarded as a
coherent risk measure and the conditional $G$-expectation can be
regarded as a dynamic risk measure.

 Tychonoff (see  \cite{TY}, \cite{W} and Theorem 4.3.3 in \cite{KS}) obtained
  the following  uniqueness theorem for the heat equation.
\begin{theorem}\label{}
 Let $u_{1}, u_{2} \in C([0,T]\times \mathbb{R})$ be
solutions of the heat equation:
\begin{equation*}
\dfrac{\partial u}{\partial t}(t,x)=\Delta u(t,x),\quad (t, x)\in(0,
T)\times\mathbb{R}^{n},
\end{equation*}
with $u_{1}(0,x)=u_{2}(0,x)=\varphi(x).$ If there are two positive
constants $c_{1}, c_{2}$ such that
\begin{equation*}
|u_{1}(t,x)|\leq c_{1}e^{c_{2}|x|^{2}}, |u_{2}(t,x)|\leq
c_{1}e^{c_{2}|x|^{2}}, \ \text{uniformly for } t \in [0,T],
\end{equation*}
 then $u_{1}\equiv u_{2}$ in $[0,T]\times \mathbb{R}^{n}$.
\end{theorem}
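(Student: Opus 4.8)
The plan is to carry out the classical Tychonoff argument in the form adapted to the stated growth class: pass to the difference, compare it against an explicit Gaussian exact solution of the heat equation on large space--time cylinders by the parabolic maximum principle, and then iterate in time. \textbf{Step 1 (reduction to the difference).} Put $w=u_1-u_2$. By linearity $w\in C([0,T]\times\mathbb R^n)$ solves $\partial_t w=\Delta w$ on $(0,T)\times\mathbb R^n$, satisfies $w(0,\cdot)\equiv 0$, and obeys $|w(t,x)|\le M e^{a|x|^2}$ uniformly in $t\in[0,T]$ with $M=2c_1$ and $a=c_2>0$; by interior regularity for the linear heat equation $w$ is $C^{2,1}$ on $(0,T)\times\mathbb R^n$, so the classical maximum principle applies. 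It suffices to show $w\equiv 0$.

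\textbf{Step 2 (short-time uniqueness via a Gaussian barrier).} Fix $T_0\in(0,\tfrac1{4a})$ and pick $c\in(T_0,\tfrac1{4a})$, so that $c>T_0$ and $\tfrac1{4c}>a$. For a point $y\in\mathbb R^n$ and a parameter $\delta>0$ set
\[
v(t,x)=\frac{\delta}{(c-t)^{n/2}}\,\exp\!\left(\frac{|x-y|^2}{4(c-t)}\right),\qquad (t,x)\in[0,T_0]\times\mathbb R^n .
\]
A direct computation (it is a time-reversed fundamental solution centered at $y$) shows $\partial_t v=\Delta v$ and $v>0$ there. On the cylinder $Q_\rho=B_\rho(y)\times(0,T_0]$ the functions $h^{\pm}=\pm w-v$ solve $\partial_t h^{\pm}=\Delta h^{\pm}$. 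On the bottom $\{t=0\}$ one has $h^{\pm}=-v<0$; on the lateral boundary $\{|x-y|=\rho\}$, using that $v$ is decreasing in $c-t$ and hence minimal at $t=0$,
\[
v(t,x)\ \ge\ \delta\,c^{-n/2}e^{\rho^2/(4c)},\qquad\text{whereas}\qquad |w(t,x)|\ \le\ M e^{a(|y|+\rho)^2},
\]
and since $\tfrac1{4c}-a>0$ the first quantity exceeds the second once $\rho\ge\rho_0(\delta,M,a,c,n,|y|)$. Hence $h^{\pm}\le 0$ on the parabolic boundary of $Q_\rho$, so the maximum principle gives $|w|\le v$ throughout $Q_\rho$; in particular $|w(t,y)|\le \delta(c-T_0)^{-n/2}$ for every $t\in[0,T_0]$. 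Letting $\delta\downarrow 0$ and varying $y$ yields $w\equiv 0$ on $[0,T_0]\times\mathbb R^n$.

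\textbf{Step 3 (iteration in time).} Because the growth bound $|w(t,x)|\le M e^{a|x|^2}$ is assumed uniformly on all of $[0,T]$, Step 2 may be reapplied starting from time $T_0$ with the new homogeneous condition $w(T_0,\cdot)\equiv 0$, giving $w\equiv 0$ on $[T_0,2T_0]\times\mathbb R^n$, then on $[2T_0,3T_0]\times\mathbb R^n$, and so on; after $\lceil T/T_0\rceil$ iterations we obtain $w\equiv 0$ on $[0,T]\times\mathbb R^n$, that is $u_1\equiv u_2$.

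\textbf{Expected main obstacle.} The crux is the lateral-boundary estimate in Step 2: one must choose $c$ (hence the time increment $T_0$) so small that the borderline-admissible growth $e^{a|x|^2}$ of $w$ is \emph{strictly} dominated by the sharper Gaussian $\exp\!\big(|x-y|^2/(4(c-t))\big)$ of the barrier. This is exactly what forces the constraint $c<\tfrac1{4a}$, and thereby the need for the time-iteration of Step 3; by contrast, verifying that $v$ solves the heat equation and invoking the maximum principle on the bounded cylinder $Q_\rho$ are routine.
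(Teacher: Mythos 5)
Your proof is correct, and it is the standard classical argument: reduce to $w=u_1-u_2$, compare $\pm w$ on large cylinders with the ``backward'' Gaussian barrier $v(t,x)=\delta(c-t)^{-n/2}\exp\bigl(|x-y|^2/(4(c-t))\bigr)$ (which indeed solves $\partial_t v=\Delta v$, and dominates the lateral boundary values once $\tfrac1{4c}>c_2$), apply the maximum principle on the bounded cylinder, let $\delta\downarrow 0$, and iterate in time steps shorter than $\tfrac1{4c_2}$. Note, however, that the paper itself does not prove this statement: it is quoted as the classical Tychonoff theorem (with references to Tychonoff, Widder and Karatzas--Shreve), and the paper's own work goes into the generalization to the $G$-heat equation $u_t=G(t,x,D^2u)$, which is proved by a genuinely different route: there, linearity is unavailable, so the reduction to the difference is replaced by Lemma 3.1 (subadditivity of $G$ makes $u_1-u_2$ a viscosity subsolution, hence an element of the Pucci class $\underline{S}(\lambda/n,\Lambda,0)$), and your explicit exact-solution barrier is replaced by Kovats' three-curves lemma on concentric concave paraboloids together with a strong maximum principle, again followed by a time-stepping iteration with step $t_0\le\tfrac1{4\Lambda c_2}$ that mirrors your constraint $c<\tfrac1{4c_2}$. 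Your approach buys an elementary, self-contained and explicit proof, but it uses twice that the equation is linear (both $w$ and $\pm w - v$ solve the same equation and $v$ is an exact solution); the paper's machinery is what allows the same quadratic-exponential uniqueness class to be handled for fully nonlinear, uniformly elliptic $G$. Two small points you gloss over: at the final time $t=T$ (where the equation need not hold) you should conclude by continuity of $w$, and the smoothness of $w$ used for the classical maximum principle deserves the one-line justification you give via interior regularity; neither affects correctness.
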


The objective of this paper is to investigate the Tychonoff
uniqueness theorem for the following  generalized $G$-heat equation.
We also call it the $G$-heat equation.
\begin{eqnarray}\label{e1}
     u_{t}-G(t, x, D^{2}u)=0,\  (t,x) \in (0,T]\times
\mathbb{R}^{n},
\end{eqnarray}
where $G:[0,T]\times
\mathbb{R}^{n}\times\mathbb{S}^{n}\rightarrow\mathbb{R}$ and only
satisfies the following conditions:\\

(H) $G$ is continuous, and for all $(t,x)\in [0,T]\times
\mathbb{R}^{n}$, $G(t,x, \cdot)$ is  subadditive and uniformly elliptic, $G(t,x, 0)=0$.\\

Da Lio and Ley \cite{DL} obtained a uniqueness theorem for
second-order Bellman-Isaacs equations under quadratic growth
assumptions.  Peng \cite{Peng:2010a} obtained a uniqueness theorem
for a class of second order parabolic equations under the polynomial
growth condition. Str\"{o}mberg \cite{S} considered the Cauchy
problem for parabolic Isaacs's equations:
\begin{eqnarray}\label{e2}
\left\{\begin{array}{l l}
    u_{t}+F(t, x, u, Du, D^{2}u)=0, \  (t,x) \in (0,T)\times
\mathbb{R}^{n},\\
  u(0,x)=\varphi(x), \  x \in  \mathbb{R}^{n},
         \end{array}
  \right.
\end{eqnarray}
where
\begin{eqnarray*}
F(t, x, r, p,
X)=\sup\limits_{\gamma}\inf\limits_{\delta}[-tr(A^{\gamma,\delta}(t,x)X)+\langle
b^{\gamma,\delta}(t,x),p\rangle+c^{\gamma,\delta}(t,x)r-f^{\gamma,\delta}(t,x)],
\\ (t,x,r,p,X) \in
(0,T]\times\mathbb{R}^{n}\times\mathbb{R}\times\mathbb{R}^{n}\times\mathbb{S}^{n}.
\end{eqnarray*}
Str\"{o}mberg obtained the following  uniqueness of viscosity
solution of (\ref{e2}):

\begin{theorem}\label{}
Let some conditions be satisfied and let $u_{1}, u_{2} \in
C(\overline{Q})$ be solutions of (\ref{e2}) in the strip
$Q=(0,T)\times \mathbb{R}^{n}$ with
$u_{1}(0,x)=u_{2}(0,x)=\varphi(x).$ If there are two positive
constants $c_{1}, c_{2}$ such that
\begin{equation*}\label{}
|u_{1}(t,x)|\leq c_{1}e^{c_{2}|x|}, |u_{2}(t,x)|\leq
c_{1}e^{c_{2}|x|}, \ \text{uniformly for } t \in [0,T],
\end{equation*}
 then $u_{1}\equiv u_{2}$ in $\overline{Q}$.
\end{theorem}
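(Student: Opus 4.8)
The plan is to derive the equality $u_{1}\equiv u_{2}$ from a one-sided comparison principle applied twice. First I would note that it suffices to establish the following: if $u_{1}$ is an upper semicontinuous viscosity subsolution and $u_{2}$ a lower semicontinuous viscosity supersolution of (\ref{e2}), both satisfying the bound $|u_{i}(t,x)|\le c_{1}e^{c_{2}|x|}$ uniformly in $t$, and $u_{1}(0,\cdot)\le u_{2}(0,\cdot)$, then $u_{1}\le u_{2}$ on $\overline{Q}$. Since both functions are genuine solutions sharing the initial datum $\varphi$, applying this statement with the roles of $u_{1}$ and $u_{2}$ interchanged yields $u_{1}\le u_{2}$ and $u_{2}\le u_{1}$, hence $u_{1}\equiv u_{2}$. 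The ``some conditions'' in the statement are the standard Crandall--Ishii--Lions structure hypotheses on the Isaacs data, uniform in the controls $(\gamma,\delta)$: $A^{\gamma,\delta}=\tfrac12\sigma^{\gamma,\delta}(\sigma^{\gamma,\delta})^{T}$ with $\sigma^{\gamma,\delta}$ Lipschitz in $x$, controlled growth of $b^{\gamma,\delta}$, a one-sided bound on $c^{\gamma,\delta}$, and joint continuity in $t$. These supply the monotonicity and the spatial modulus-of-continuity estimates used below.

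The core is a doubling of variables tailored to exponential growth. Arguing by contradiction, I would suppose $M:=\sup_{\overline{Q}}(u_{1}-u_{2})>0$ and, for small $\varepsilon,\beta,\eta>0$, maximize over $[0,T)\times\mathbb{R}^{n}\times\mathbb{R}^{n}$ the function
\begin{equation*}
\Phi(t,x,y)=u_{1}(t,x)-u_{2}(t,y)-\frac{|x-y|^{2}}{2\varepsilon}-\beta\bigl(\chi(x)+\chi(y)\bigr)-\frac{\eta}{T-t},
\end{equation*}
with penalization $\chi(x)=e^{\lambda\sqrt{1+|x|^{2}}}$ and $\lambda>c_{2}$. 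Because $\chi$ outgrows the exponential bound on the $u_{i}$, the map $\Phi$ is coercive and attains its maximum at an interior point $(\hat t,\hat x,\hat y)$, while the $\eta/(T-t)$ term keeps $\hat t$ bounded away from $T$. Comparing $\Phi(\hat t,\hat x,\hat y)$ with the value of $\Phi$ at a near-maximizing reference point of $u_{1}-u_{2}$ gives the usual estimate $|\hat x-\hat y|^{2}/\varepsilon\to0$ as $\varepsilon\to0$ and, by the classical penalization lemma, $\beta\chi(\hat x)\to0$ and $\beta\chi(\hat y)\to0$ as $\beta\to0$.

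Next I would apply the parabolic theorem on sums to produce a time-splitting $a_{1}-a_{2}=\eta/(T-\hat t)^{2}$ and symmetric matrices $X,Y$ obeying the usual inequality, and write the subsolution inequality for $u_{1}$ at $(\hat t,\hat x)$ and the supersolution inequality for $u_{2}$ at $(\hat t,\hat y)$ with gradients $p_{1}=(\hat x-\hat y)/\varepsilon+\beta D\chi(\hat x)$ and $p_{2}=(\hat x-\hat y)/\varepsilon-\beta D\chi(\hat y)$. Subtracting them leaves
\begin{equation*}
\frac{\eta}{(T-\hat t)^{2}}\le F\bigl(\hat t,\hat y,u_{2}(\hat t,\hat y),p_{2},Y\bigr)-F\bigl(\hat t,\hat x,u_{1}(\hat t,\hat x),p_{1},X\bigr).
\end{equation*}
The sup-inf structure, via the elementary bound $\sup_{\gamma}\inf_{\delta}a_{\gamma,\delta}-\sup_{\gamma}\inf_{\delta}b_{\gamma,\delta}\le\sup_{\gamma,\delta}(a_{\gamma,\delta}-b_{\gamma,\delta})$, lets me estimate the right-hand side term by term and uniformly in $(\gamma,\delta)$: the Lipschitz bound on $\sigma$ together with the matrix inequality controls $\mathrm{tr}(A^{\gamma,\delta}Y)-\mathrm{tr}(A^{\gamma,\delta}X)$ by $|\hat x-\hat y|^{2}/\varepsilon$ plus a penalization remainder, and the one-sided bound on $c^{\gamma,\delta}$ gives the zeroth-order term a favourable sign.

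The hard part will be the first-order term. Since $|D\chi(\hat x)|=\lambda|\hat x|(1+|\hat x|^{2})^{-1/2}\chi(\hat x)\le\lambda\chi(\hat x)$, the pairing of $b^{\gamma,\delta}$ with the penalization gradient is, under the growth hypothesis on $b^{\gamma,\delta}$, dominated by a constant multiple of $\beta\chi(\hat x)$ up to a lower-order factor that the tuning $\lambda>c_{2}$ absorbs; the whole scheme closes precisely because $\beta\chi(\hat x)\to0$ and $\beta\chi(\hat y)\to0$. Verifying this balance between the exponential penalization and the exponential growth of the data is the technical crux, and it is also what pins the admissible growth to $e^{c_{2}|x|}$ rather than the quadratic-exponential class, since a faster penalization would produce a Hessian incompatible with the Lipschitz diffusion. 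Once the estimate is in place, letting $\varepsilon\to0$ and then $\beta\to0$ makes the right-hand side vanish, so the strictly positive quantity $\eta/(T-\hat t)^{2}\ge\eta/T^{2}$ would be bounded above by a quantity tending to $0$ --- impossible. Hence $M\le0$, so $u_{1}\le u_{2}$, and by symmetry $u_{1}\equiv u_{2}$ on $\overline{Q}$.
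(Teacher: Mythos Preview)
The paper does not prove this theorem. It is quoted in the introduction as a result of Str\"{o}mberg \cite{S}, stated without proof, and serves only to motivate the paper's own question: can the growth hypothesis be relaxed from $e^{c_{2}|x|}$ to $e^{c_{2}|x|^{2}}$? So there is no proof in the paper to compare against.

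Your sketch is the standard doubling-of-variables argument with an exponential penalization $\chi(x)=e^{\lambda\sqrt{1+|x|^{2}}}$, and it is essentially the route Str\"{o}mberg takes. As an outline it is reasonable; the ``some conditions'' you supply are indeed the structure conditions in \cite{S}, and the crux you identify --- that the Hessian of the penalization must stay compatible with the diffusion, which is what caps the admissible growth at $e^{c_{2}|x|}$ --- is exactly right.

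For contrast, the paper's proof of its own main result (Theorem~3.2, for the $G$-heat equation under the quadratic-exponential bound) is entirely different and does \emph{not} use doubling of variables at the comparison stage. Instead it first shows (Lemma~3.1) that $v=u_{1}-u_{2}$ is itself a viscosity subsolution of the $G$-heat equation, hence belongs to the Pucci class $\underline{S}(\lambda/n,\Lambda,0)$. It then applies a Phragm\'en--Lindel\"of-type ``three curves lemma'' of Kovats (Lemma~2.3) on concentric paraboloidal shells: the maximum $M(\rho)$ of $v$ over the paraboloid $|x|=\rho\sqrt{t_{0}-t}$ satisfies a convexity inequality in a comparison scale $\psi(\rho)$ with $\psi'(\rho)\sim e^{\rho^{2}/4\Lambda}$. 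The growth bound $|v|\le 2c_{1}e^{c_{2}|x|^{2}}$ gives $M(\rho_{2})/\psi(\rho_{2})\to 0$ as $\rho_{2}\to\infty$ provided $t_{0}\le 1/(4\Lambda c_{2})$, which forces the maximum onto the axis $x=0$; the strong maximum principle then propagates $v\equiv 0$ on a short time strip, and iteration covers $[0,T]$. It is this geometric comparison, not a sharper penalization, that lets the paper reach the full Tychonoff class and answer Str\"{o}mberg's question.
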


Str\"{o}mberg raised a question if solutions of the Cauchy problem
for (\ref{e2}) under weaker conditions than $|u(t,x)|\leq Ke^{k|x|}$
are unique. We shall give a positive answer and  prove that
solutions of the Cauchy problem for the $G$-heat equation (\ref{e1})
satisfying $|u(t,x)|\leq Ke^{k|x|^{2}}$ are unique.

 The rest of the paper is organized as follows. In Section 2, we give
notations and preliminaries which will be needed in what follows. In
Section 3, we investigate the Tychonoff uniqueness theorem for the
$G$-heat equation.

\section{Notations and Preliminaries }
The objective of this section is to give some notations and
preliminaries, which we will need.  We first recall the definition
of the parabolic superjet and the parabolic subjet.

\begin{definition}   Let $u: (0,T)\times
\mathbb{R}^{n}\rightarrow\mathbb{R}$ and $(t,x)\in (0,T)\times
\mathbb{R}^{n}$. Then we define the parabolic superjet of $u$ at
$(t,x)$:
\begin{eqnarray*}
{\cal{P}}^{2,+}u(t,x)&=&\Big\{(p,q,X)\in \mathbb{R}\times
\mathbb{R}^{n}\times \mathbb{S}^{n}\
 | \ u(s,y)\leq u(t,x)+p(s-t)+\langle q,y-x\rangle\\
 &&\quad+\displaystyle\frac{1}{2}\langle X(y-x),y-x \rangle
 +o(|s-t|+\|y-x\|^{2}),\ (s,y) \rightarrow(t,x)\Big\},
\end{eqnarray*} and its closure:
\begin{eqnarray*}
\bar{{\cal{P}}}^{2,+}u(t,x)&=&\Big\{(p,q,X)\in \mathbb{R}\times
\mathbb{R}^{n}\times \mathbb{S}^{n}\
 | \ \exists \ (t_{n},x_{n}, p_{n},q_{n},X_{n}) \ \text{such that}\\
&&\quad (p_{n},q_{n},X_{n})\in {\cal{P}}^{2,+}u(t_{n},x_{n})\
\text{and}\ (p,q,X)=\lim\limits_{n\rightarrow
+\infty}(p_{n},q_{n},X_{n}),\\ &&\quad \mbox{and}\
\lim\limits_{n\rightarrow
                  +\infty}(t_{n},x_{n},u(t_{n},x_{n}))=(t,x,u(t,x))\Big\}.
\end{eqnarray*}
\end{definition}
Similarly, we consider the parabolic subjet and its closure:
\begin{eqnarray*}
 {\cal{P}}^{2,-}u(t,x)=-{\cal{P}}^{2,+}(-u)(t,x),
\bar{{\cal{P}}}^{2,-}u(t,x)=-\bar{{\cal{P}}}^{2,+}(-u)(t,x).
\end{eqnarray*}
According to \cite{CIL1992}, we have
$$
\begin{array}{r}
{\cal{P}}^{2,+(-)}u(t,x)=\Big\{(\displaystyle\frac{\partial
\varphi}{\partial t}(t,x),D\varphi(t,x),D^{2}\varphi(t,x)),
\varphi\in C^{1,2}([0,T]\times \mathbb{R}^{n}),\\
u-\varphi \mbox{\ has a global maximum(minimum) 0 at }(t,x)\Big\}.
\end{array}
$$

We now recall the definition of  viscosity solution of (\ref{e1})
from Crandall, Ishii and Lions \cite{CIL1992}.
\begin{definition}$\ $
\begin{enumerate}[(i)]
\item A viscosity subsolution of  (\ref{e1}) on $(0,T)\times \mathbb{R}^n$
 is a function $u\in USC((0,T)\times \mathbb{R}^n)$
such that, for all $(t,x)\in (0,T)\times \mathbb{R}^n$,
\begin{eqnarray*}
p-G(t,x, X)\leq 0, \ for \ (p,q,X)\in\mathcal {P}^{2,+}u(t,x).
\end{eqnarray*}

\item  A viscosity supsolution of  (\ref{e1}) on $(0,T)\times \mathbb{R}^n$
 is a function $u\in LSC((0,T)\times \mathbb{R}^n)$
such that, for all $(t,x)\in (0,T)\times \mathbb{R}^n$,
\begin{eqnarray*}
p-G(t,x, X)\geq 0, \ for \ (p,q,X)\in\mathcal {P}^{2,-}u(t,x).
\end{eqnarray*}

\item $u\in C((0,T)\times \mathbb{R}^n)$ is said to be a
viscosity solution of (\ref{e1}) on $(0,T)\times \mathbb{R}^n$ if it
is both a viscosity subsolution and supersolution  of (\ref{e1}) on
$(0,T)\times \mathbb{R}^n$.
\end{enumerate}
\end{definition}

Let $M>0, x\in \mathbb{R}^{n}.$ We say that $P(x)$ is a paraboloid
of opening $M$ if $P(x)=\pm\frac{M}{2}|x|^{2}+l(x)+l_{0},$ where $l$
is linear and $l_{0}$ is a constant. $P(x)$ is convex if $+$ appears
and concave if $-$ appears. So for $t_{0}, \rho>0,$ the equation
$t=t_{0}-\frac{|x|^{2}}{\rho^{2}}$ denotes the graph of a concave
paraboloid of opening $\frac{2}{\rho^{2}}$ with vertex at $(t_{0},
0)\in \mathbb{R}^{n+1},$ which we will henceforth write as
$\rho=\frac{|x|}{\sqrt{t_{0}-t}}$. By concentric concave paraboloids
of opening $2\rho_{1}^{-2}$ and $2\rho_{2}^{-2}$, we mean these
paraboloids have common vertex $(t_{0}, 0)$.

Let $Q\subset\mathbb{R}^{n+1}$. $Q$ is bounded below by the line
$t=0$ and above by the line $t=t', \text{where} \ t'<t_{0}$. $Q$ is
bounded laterally by the arcs of the paraboloids
$\rho_{1}=\frac{|x|}{\sqrt{t_{0}-t}}$ and
$\rho_{2}=\frac{|x|}{\sqrt{t_{0}-t}}$ of opening $2\rho_{1}^{-2}$
and $2\rho_{2}^{-2}$ respectively, with $\rho_{1}\leq\rho_{2}.$
Geometrically, $Q$ is a concave paraboloid shell, truncated just
below the vertex $(t_{0}, 0).$  For $\rho_{1}\leq\rho\leq\rho_{2}$,
we define the functions as follows:

\begin{eqnarray*}
M_{1}(\rho)=\max\limits_{|x|=\rho\sqrt{t_{0}-t},\ 0 \leq t \leq
t'}u(t,x),\\
 M_{2}=\max\limits_{\rho_{1}\sqrt{t_{0}}\leq|x|\leq\rho_{2}\sqrt t_{0}}u(0,x),\\
 M(\rho)=\max\{ M_{1}(\rho), M_{2}\}.
\end{eqnarray*}

For $f\in C(Q)$ and positive constants $\lambda\leq\Lambda$, we
denote by $\underline{S}(\lambda,\Lambda,f)$ the class of viscosity
subsolutions of the equation $\mathcal
{M}(D^{2}u,\lambda,\Lambda)-u_{t}=f(t,x)$, where for any real
$n\times n$ symmetric matrix $M$
\begin{eqnarray*}
\mathcal
{M}(D^{2}u,\lambda,\Lambda)=\Lambda\sum\limits_{e_{i}>0}e_{i}(M)+\lambda\sum\limits_{e_{i}<0}e_{i}(M),
\end{eqnarray*}
where $e_{i}(M)$ are the eigenvalue of $M$.

Finally, the following three curves Lemma was obtained in Kovats
\cite{JK}.
\begin{lemma}\label{le1}
Let $u\in \underline{S}(\lambda,\Lambda,0)$ in a domain
$Q\subset\mathbb{R}^{n+1}$ containing two concave concentric
paraboloids of opening $2\rho_{1}^{-2}$ and $2\rho_{2}^{-2}$ and the
region between them. If $M(\rho)$ denotes the maximum of $u$ on any
concentric concave paraboloid of opening $2\rho^{-2}$, with
$\rho_{1}\leq\rho\leq\rho_{2}$, then there exists a differential
function $\psi(\rho)$ depending on $n, \lambda, \Lambda$ and $\rho$,
such that
 \begin{eqnarray*}\label{}
M(\rho)\leq\dfrac{M(\rho_{1})(\psi(\rho_{2})-\psi(\rho))+M(\rho_{2})
(\psi(\rho)-\psi(\rho_{1}))}{\psi(\rho_{2})-\psi(\rho_{1})}.
\end{eqnarray*}
Moreover, if $\psi'(\rho)\geq0$,  then
$\psi'(\rho)=e^{\frac{\rho^{2}}{4\Lambda}}\rho^{-\frac{\Lambda(n-1)}{\lambda}}$;
and if $\psi'(\rho)\leq0$,  then
$\psi'(\rho)=-e^{\frac{\rho^{2}}{4\lambda}}\rho^{-\frac{\lambda(n-1)}{\Lambda}}$.
\end{lemma}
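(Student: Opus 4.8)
The plan is to run a parabolic analogue of the Hadamard three--circles (Phragm\'en--Lindel\"of) argument along the family of concave paraboloids, with $\psi$ playing the role that $\log|x|$ plays in the classical three--circles theorem. We may assume $\rho_1<\rho_2$. Fix the profile from the statement with $\psi'\ge 0$, i.e.\ $\psi(\rho)=\int_{\rho_1}^{\rho}e^{s^{2}/(4\Lambda)}\,s^{-\Lambda(n-1)/\lambda}\,ds$; it is smooth and strictly increasing on $[\rho_1,\rho_2]$ since $\rho_1>0$, so $\psi(\rho_1)\neq\psi(\rho_2)$ and
\[
h(t,x)=\frac{M(\rho_1)\big(\psi(\rho_2)-\psi(\rho(t,x))\big)+M(\rho_2)\big(\psi(\rho(t,x))-\psi(\rho_1)\big)}{\psi(\rho_2)-\psi(\rho_1)},\qquad \rho(t,x)=\frac{|x|}{\sqrt{t_0-t}},
\]
is well defined and smooth on $\overline{Q}$ (on $\overline{Q}$ one has $|x|\ge\rho_1\sqrt{t_0-t'}>0$, so $\rho$ is smooth there). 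It is an affine function of $\psi(\rho(\cdot))$, equals $M(\rho_1)$ on the inner paraboloid $\{\rho=\rho_1\}$ and $M(\rho_2)$ on the outer one $\{\rho=\rho_2\}$, and, $\psi$ being monotone and $\rho(t,x)\in[\rho_1,\rho_2]$ on $\overline{Q}$, $h$ is a convex combination of $M(\rho_1)$ and $M(\rho_2)$ there; in particular $h\ge\min\{M(\rho_1),M(\rho_2)\}\ge M_2$. (If $M(\rho_2)<M(\rho_1)$ one uses instead the profile with $\psi'\le 0$, whereupon the roles of $\lambda$ and $\Lambda$ are interchanged throughout; the argument is otherwise unchanged.)

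The crux is to check that $h$ is a classical supersolution of the extremal equation, i.e.\ $\mathcal{M}(D^{2}h,\lambda,\Lambda)-h_{t}\le 0$ on $Q$. Writing $r=|x|$, $s=\sqrt{t_0-t}$ and differentiating $\psi(r/s)$, one finds that $r^{2}D^{2}\big(\psi(\rho)\big)$ has the eigenvalue $\psi''(\rho)\rho^{2}$ in the radial direction $x/|x|$ and the eigenvalue $\psi'(\rho)\rho$ with multiplicity $n-1$ orthogonally, while $r^{2}\partial_{t}\big(\psi(\rho)\big)=\tfrac12\psi'(\rho)\rho^{3}$. Since $h$ differs from $\psi(\rho)$ by a nonnegative multiplicative constant and an additive constant (the case of a zero multiple, i.e.\ $h$ constant, being trivial) and the Pucci maximal operator $\mathcal{M}(\cdot,\lambda,\Lambda)$ is positively homogeneous, the required inequality reduces, after multiplication by $r^{2}/\rho>0$, to the linear second--order ODE inequality
\[
c(\rho)\,\rho\,\psi''(\rho)\ \le\ \psi'(\rho)\Big(\tfrac{\rho^{2}}{2}-\Lambda(n-1)\Big),\qquad \rho_1\le\rho\le\rho_2,
\]
where $c(\rho)=\Lambda$ on $\{\psi''\ge 0\}$ and $c(\rho)=\lambda$ on $\{\psi''\le 0\}$ is the coefficient that $\mathcal{M}(\cdot,\lambda,\Lambda)$ attaches to the single radial eigenvalue. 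For the chosen profile one has $\psi''/\psi'=\rho/(2\Lambda)-\Lambda(n-1)/(\lambda\rho)$, hence $c(\rho)\rho\psi''=\psi'\big(\tfrac{c(\rho)}{2\Lambda}\rho^{2}-\tfrac{c(\rho)\Lambda(n-1)}{\lambda}\big)$; and since $\lambda\le c(\rho)\le\Lambda$ we get $\tfrac{c(\rho)}{2\Lambda}\rho^{2}\le\tfrac{\rho^{2}}{2}$ and $\tfrac{c(\rho)\Lambda(n-1)}{\lambda}\ge\Lambda(n-1)$, so the inequality holds on all of $Q$. This is exactly why the constants in $\psi'$ describe a supersolution, not an exact solution, of the extremal equation.

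It remains to compare $u$ with $h$ on $Q$. Its parabolic boundary $\partial_{p}Q$ is the union of the bottom $\{t=0\}\cap\overline{Q}=\{\rho_1\sqrt{t_0}\le|x|\le\rho_2\sqrt{t_0}\}$ and the two lateral paraboloid arcs $\{\rho=\rho_1\}$, $\{\rho=\rho_2\}$, $0\le t\le t'$; on the arcs $u\le M_1(\rho_i)\le M(\rho_i)=h$, and on the bottom $u(0,\cdot)\le M_2\le h$, so $u\le h$ on $\partial_{p}Q$. Because $\mathcal{M}(\cdot,\lambda,\Lambda)$ is subadditive and $h$ is a smooth supersolution, $u-h$ is a viscosity subsolution of $\mathcal{M}(D^{2}w,\lambda,\Lambda)-w_{t}=0$ in $Q$, that is, $u-h\in\underline{S}(\lambda,\Lambda,0)$. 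The maximum principle for $\underline{S}(\lambda,\Lambda,0)$ on $Q$ (standard for Pucci extremal parabolic operators, cf.\ \cite{CIL1992}) then yields $u\le h$ on all of $\overline{Q}$. Finally, for $\rho_1\le\rho\le\rho_2$ every point of the arc $\{|x|=\rho\sqrt{t_0-t},\ 0\le t\le t'\}$ satisfies $\rho(t,x)=\rho$, so there $u(t,x)\le h(t,x)$, and $h$ at such a point equals the right--hand side of the asserted inequality; maximizing over the arc gives $M_1(\rho)\le(\text{that quantity})$, and since it also dominates $M_2$ we obtain $M(\rho)=\max\{M_1(\rho),M_2\}\le(\text{that quantity})$, which is the claim.

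I expect the barrier construction in the second step to be the main obstacle: one must find the correct self--similar ansatz $h=h(\rho(t,x))$ and recognize that $\psi$ should be taken as a \emph{super}solution rather than a solution of the extremal equation --- this accounts for the otherwise puzzling mismatch between the constant $\tfrac14\Lambda$ in the exponential and the exponent $\Lambda(n-1)/\lambda$ in the power, which is reconciled only through $\lambda\le\Lambda$ --- and one must keep track of which Pucci coefficient multiplies the radial eigenvalue on each part of $[\rho_1,\rho_2]$. The comparison step is routine once the maximum principle for $\underline{S}(\lambda,\Lambda,0)$ on the non--cylindrical domain $Q$ is invoked; that principle is classical but should be cited with care, since $Q$ is a concave paraboloid shell rather than a product cylinder.
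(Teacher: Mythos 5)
The paper does not actually prove this lemma: it is quoted verbatim from Kovats (reference [JK] in the bibliography), so there is no internal proof to compare against. Your blind reconstruction is, as far as I can check, correct and is the expected comparison-function argument behind Kovats' three-curves theorem. The computations are right: for $w=\psi(\rho)$ with $\rho=|x|/\sqrt{t_0-t}$ the scaled Hessian $r^2D^2w$ has radial eigenvalue $\psi''\rho^2$ and tangential eigenvalue $\psi'\rho$ of multiplicity $n-1$, with $r^2w_t=\tfrac12\psi'\rho^3$, and your bracket-by-bracket comparison using $\lambda\le c(\rho)\le\Lambda$ does show that the mixed-constant profile $\psi'=e^{\rho^2/(4\Lambda)}\rho^{-\Lambda(n-1)/\lambda}$ (resp.\ its decreasing counterpart) yields $\mathcal{M}(D^2h,\lambda,\Lambda)-h_t\le0$ on the whole shell, regardless of the sign of $\psi''$ — a genuinely useful observation, since it removes any need to track where $\psi''$ changes sign and explains the otherwise odd pairing of $4\Lambda$ with $\Lambda(n-1)/\lambda$. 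The case split (increasing $\psi$ when $M(\rho_2)\ge M(\rho_1)$, decreasing otherwise) is exactly what is needed to keep the affine coefficient in $h$ nonnegative so that positive homogeneity of the Pucci operator transfers the supersolution property, and it matches the two alternatives for $\psi'$ in the statement; the boundary comparison $u\le h$ on the bottom annulus and the two lateral arcs, the subadditivity argument placing $u-h$ in $\underline{S}(\lambda,\Lambda,0)$, and the parabolic maximum principle on the (non-cylindrical) shell are all used correctly, the last being standard but, as you note, worth citing carefully. In short: the proposal is sound and supplies a proof the paper itself only outsources; the only cosmetic caveats are that you should state explicitly that $\rho_1>0$ and $\rho_1<\rho_2$ (the degenerate cases being trivial) and that $\psi$ is only claimed to be a supersolution, not a solution, of the extremal equation — both of which you in fact already address.
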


\section{Main result}

 The objective of this section is to investigate  the Tychonoff uniqueness theorem for the $G$-heat
equation. In order to get this theorem, we first give the following
lemma.

\begin{lemma}\label{2}
Let $u$ be a viscosity subsolution and $v$ be a viscosity
supersolution of (\ref{e1}).  Then   $u-v$ is a viscosity
subsolution of (\ref{e1}).
\end{lemma}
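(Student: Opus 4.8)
The plan is to verify the viscosity-subsolution inequality for $w:=u-v$ directly, by the doubling-of-variables method; the only structural feature we really lean on is the subadditivity of $G$ in its Hessian slot, which here plays the role that linearity plays when one subtracts classical sub- and supersolutions. First note that $w=u+(-v)$ is upper semicontinuous, since $u\in USC$ and, $v$ being a supersolution, $-v\in USC$; so it remains to check the jet condition. Fix $(\hat t,\hat x)\in(0,T)\times\mathbb{R}^n$ and $\varphi\in C^{1,2}$ with $w-\varphi$ attaining a global maximum $0$ at $(\hat t,\hat x)$; replacing $\varphi$ by $\varphi(t,x)+|t-\hat t|^2+|x-\hat x|^4$, which leaves $\varphi_t,D\varphi,D^2\varphi$ unchanged at $(\hat t,\hat x)$, we may take this maximum strict. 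The goal becomes
\[
\varphi_t(\hat t,\hat x)-G\bigl(\hat t,\hat x,D^2\varphi(\hat t,\hat x)\bigr)\le 0 .
\]

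Next I would double the space variable. For $\varepsilon>0$, on a fixed compact neighbourhood of $(\hat t,\hat x,\hat x)$, consider
\[
\Phi_\varepsilon(t,x,y)=u(t,x)-v(t,y)-\varphi(t,x)-\tfrac{1}{2\varepsilon}|x-y|^2 ,
\]
take a maximum point $(t_\varepsilon,x_\varepsilon,y_\varepsilon)$, and apply the standard penalization lemma of Crandall--Ishii--Lions \cite{CIL1992}: along a subsequence $(t_\varepsilon,x_\varepsilon)\to(\hat t,\hat x)$, $(t_\varepsilon,y_\varepsilon)\to(\hat t,\hat x)$, $\varepsilon^{-1}|x_\varepsilon-y_\varepsilon|^2\to 0$, the values $u(t_\varepsilon,x_\varepsilon),v(t_\varepsilon,y_\varepsilon)$ converge to $u(\hat t,\hat x),v(\hat t,\hat x)$, and $(t_\varepsilon,x_\varepsilon,y_\varepsilon)$ is interior for $\varepsilon$ small. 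Then by the parabolic theorem on sums \cite{CIL1992}, applied to $u$ and $-v$, there are $b_1,b_2\in\mathbb{R}$ and $X_\varepsilon,Y_\varepsilon\in\mathbb{S}^n$ with
\[
\Bigl(b_1,\,D\varphi(t_\varepsilon,x_\varepsilon)+\tfrac{1}{\varepsilon}(x_\varepsilon-y_\varepsilon),\,X_\varepsilon\Bigr)\in\bar{\mathcal{P}}^{2,+}u(t_\varepsilon,x_\varepsilon),\qquad
\Bigl(b_2,\,\tfrac{1}{\varepsilon}(x_\varepsilon-y_\varepsilon),\,Y_\varepsilon\Bigr)\in\bar{\mathcal{P}}^{2,-}v(t_\varepsilon,y_\varepsilon),
\]
with $b_1-b_2=\varphi_t(t_\varepsilon,x_\varepsilon)$, with $\|X_\varepsilon\|,\|Y_\varepsilon\|\le C\varepsilon^{-1}$, and with an accompanying matrix inequality which, tested against diagonal vectors $(\xi,\xi)$, gives $X_\varepsilon-Y_\varepsilon\le D^2\varphi(t_\varepsilon,x_\varepsilon)+o(1)\,I$.

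Now use that $u$ is a subsolution and $v$ a supersolution (the inequalities extending to the closed jets as usual): $b_1\le G(t_\varepsilon,x_\varepsilon,X_\varepsilon)$ and $b_2\ge G(t_\varepsilon,y_\varepsilon,Y_\varepsilon)$, hence $\varphi_t(t_\varepsilon,x_\varepsilon)\le G(t_\varepsilon,x_\varepsilon,X_\varepsilon)-G(t_\varepsilon,y_\varepsilon,Y_\varepsilon)$. Here I would apply subadditivity of $G(t_\varepsilon,x_\varepsilon,\cdot)$, namely $G(t_\varepsilon,x_\varepsilon,X_\varepsilon)\le G(t_\varepsilon,x_\varepsilon,X_\varepsilon-Y_\varepsilon)+G(t_\varepsilon,x_\varepsilon,Y_\varepsilon)$, followed by monotonicity of $G$ in the Hessian (a consequence of uniform ellipticity) together with $X_\varepsilon-Y_\varepsilon\le D^2\varphi(t_\varepsilon,x_\varepsilon)+o(1)I$, to obtain
\[
\varphi_t(t_\varepsilon,x_\varepsilon)\le G\bigl(t_\varepsilon,x_\varepsilon,D^2\varphi(t_\varepsilon,x_\varepsilon)+o(1)I\bigr)+\bigl[G(t_\varepsilon,x_\varepsilon,Y_\varepsilon)-G(t_\varepsilon,y_\varepsilon,Y_\varepsilon)\bigr].
\]

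Letting $\varepsilon\to 0$, the first term on the right converges to $G(\hat t,\hat x,D^2\varphi(\hat t,\hat x))$ by continuity of $G$, so everything reduces to showing the bracketed term is $o(1)$ — this is the main obstacle. When $G$ has no genuine dependence on the spatial variable (in particular for the original $G$-heat equation) the bracket vanishes identically and one is done. For $x$-dependent $G$ the delicate point is that $x_\varepsilon-y_\varepsilon\to0$ while $Y_\varepsilon$ may grow like $\varepsilon^{-1}$, so one must couple the (uniform, on the relevant compacta) continuity of $G$ with the explicit size bound on $Y_\varepsilon$ and the smallness of $\varepsilon^{-1}|x_\varepsilon-y_\varepsilon|^2$ from the penalization lemma, if necessary sharpening the penalty to $|x-y|^4$ or imposing a modulus-of-continuity (structure) condition on $G$ in $(t,x)$ of the kind used in \cite{CIL1992}. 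Once the bracket is shown to be $o(1)$, passing to the limit yields $\varphi_t(\hat t,\hat x)\le G(\hat t,\hat x,D^2\varphi(\hat t,\hat x))$, i.e. $w=u-v$ is a viscosity subsolution of (\ref{e1}).
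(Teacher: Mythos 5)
Your route is the same as the paper's: double the space variable with a quadratic penalty, apply the parabolic theorem on sums (Theorem 8.3 in \cite{CIL1992}) to $u$ and $-v$, use subadditivity of $G(t,x,\cdot)$ together with the degenerate-ellipticity monotonicity to replace $X_\varepsilon-Y_\varepsilon$ by $D^2\varphi$, and pass to the limit by continuity of $G$. The preliminary steps you spell out (upper semicontinuity of $u-v$, reduction to a strict maximum, convergence of the penalized maximum points) are exactly what the paper uses, so up to the last step your proposal and the paper's argument coincide.

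The one step you leave open --- the bracket $G(t_\varepsilon,x_\varepsilon,Y_\varepsilon)-G(t_\varepsilon,y_\varepsilon,Y_\varepsilon)$ --- is precisely the point the paper does not address: since the supersolution jet element lies in $\overline{\mathcal{P}}^{2,-}v(t_\varepsilon,y_\varepsilon)$, the honest inequality is $q\geq G(t_\varepsilon,y_\varepsilon,Y)$, whereas the paper simply writes $q-G(t_\varepsilon,x_\varepsilon,Y)\geq 0$, i.e.\ it evaluates $G$ at $x_\varepsilon$ where the supersolution property only gives $y_\varepsilon$. With that substitution the spatial-dependence term disappears from the paper's computation, but under (H) alone (mere continuity of $G$ in $(t,x)$) it is not justified: as you observe, $\|Y_\varepsilon\|$ may grow at the penalization rate while only the product of that rate with $|x_\varepsilon-y_\varepsilon|^2$ tends to zero, so plain continuity cannot absorb the bracket; one needs either $G$ independent of $x$ (as for the original $G$-heat equation (\ref{e4})) or a structure/modulus condition of the type (3.14) in \cite{CIL1992}. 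So your proposal is not a complete proof of the lemma as stated for genuinely $x$-dependent $G$, but the gap you flag is exactly the gap hidden in the paper's own proof, and your write-up is the more candid of the two; to make either argument airtight one should add such a structure hypothesis to (H) (or restrict to $G=G(t,D^2u)$), after which your limit passage finishes the proof.
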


\begin{proof}
   Let $\varphi \in C^{2}((0,T]  \times \mathbb{R}^{n})$ and $(t_{0},
x_{0}) \in [0,T]\times \mathbb{R}^{n}$ be a strict local maximum
point of $u-v-\varphi$, and more precisely a strict maximum point in
$[t_{0}-r, t_{0}+r]\times \overline{B}(x_{0},r)$, where
$\overline{B}(x_{0},r)=\{x\in\mathbb{R}^{n}: \ |x-x_{0}|\leq r\}$ is
a ball with a radius $r>0$. Then we consider the function:
   $$\Phi_{\varepsilon}(t, x, y)=u(t, x)- v(t, y)- \dfrac{|x-y|^{2}}{\varepsilon^{2}}-\varphi(t, x),$$
 where $\varepsilon$ is a positive parameter.

 Since $(t_{0}, x_{0}) $ is a strict  maximum point of
 $u-v-\varphi$ in $[t_{0}-r, t_{0}+r]\times \overline{B}(x_{0},r)$,
  then by virtue of the classical argument  of viscosity
 solutions, there exists   $(t_{\varepsilon}, x_{\varepsilon},y_{\varepsilon})$ such that

\begin{enumerate}[(i)]
\item $(t_{\varepsilon}, x_{\varepsilon},y_{\varepsilon})$ is a strict maximum
point of $\Phi_{\varepsilon}(t, x, y)$ in $[t_{0}-r, t_{0}+r]\times
\overline{B}(x_{0},r)\times \overline{B}(x_{0},r)$;

\item $(t_{\varepsilon}, x_{\varepsilon},y_{\varepsilon}) \rightarrow (t_{0}, x_{0},  x_{0})
 $, as $\varepsilon\rightarrow 0$;

\item $\dfrac{| x_{\varepsilon}-y_{\varepsilon}|^{2}}{\varepsilon^{2}}$ is bounded
  and $\dfrac{|x_{\varepsilon}-y_{\varepsilon}|^{2}}{\varepsilon^{2}}\rightarrow
  0$, as $\varepsilon\rightarrow 0$.
\end{enumerate}
 Thanks to Theorem 8.3 in \cite{CIL1992}, for every $\alpha>0$, there exist $p,q\in\mathbb{R}$ and $X, Y \in
  \mathbb{S}^{n}$ such that
\begin{eqnarray*}
&&\Big(p,\dfrac{2(x_{\varepsilon}-y_{\varepsilon})}{\varepsilon^{2}}+D\varphi(t_{\varepsilon},
 x_{\varepsilon}), X\Big)\in \overline{\mathcal {P}}^{2, +}u(t_{\varepsilon},
 x_{\varepsilon}),\\
&&\Big(q,\dfrac{2(x_{\varepsilon}-y_{\varepsilon})}{\varepsilon^{2}},
Y\Big)
 \in \overline{\mathcal {P}}^{2, -}v(t_{\varepsilon},
 y_{\varepsilon}),\\
&& p-q=\dfrac{\partial\varphi(t_{\varepsilon},
 x_{\varepsilon})}{\partial t},
\end{eqnarray*}
and
\begin{eqnarray*}\label{}
 \left(
\begin{array}{cc}
X & 0 \\
0 &-Y
\end{array}
\right)\leq A+\alpha A^{2},
\end{eqnarray*}
where
\begin{eqnarray*}
 A=
\left(
\begin{array}{cc}
D^{2}\varphi(t_{\varepsilon},
 x_{\varepsilon})+ \dfrac{2I}{\varepsilon^{2}}& -\dfrac{2I}{\varepsilon^{2}},
 \\
-\dfrac{2I}{\varepsilon^{2}} & \dfrac{2I}{\varepsilon^{2}}
\end{array}
\right).
\end{eqnarray*}
Taking $\alpha=\dfrac{\varepsilon^{2}}{2}$, we get
\begin{eqnarray*}
 \left(
\begin{array}{cc}
X & 0 \\
0 &-Y
\end{array}
\right)\leq \dfrac{6}{\varepsilon^{2}} \left(
\begin{array}{cc}
I& -I \\
-I &I
\end{array}
\right) + \dfrac{\varepsilon^{2}}{2} \left(
\begin{array}{cc}
(D^{2}\varphi(t_{\varepsilon},
 x_{\varepsilon}))^{2}& 0 \\
0 & 0
\end{array}
\right) \nonumber \\  +\left(
\begin{array}{cc}
3D^{2}\varphi(t_{\varepsilon},
 x_{\varepsilon})& -D^{2}\varphi(t_{\varepsilon},
 x_{\varepsilon}) \\
-D^{2}\varphi(t_{\varepsilon},
 x_{\varepsilon}) & 0
\end{array}
\right).
\end{eqnarray*}
Therefore, we have
\begin{eqnarray}\label{e3}
X-Y\leq \dfrac{\varepsilon^{2}}{2}(D^{2}\varphi(t_{\varepsilon},
 x_{\varepsilon}))^{2}+D^{2}\varphi(t_{\varepsilon},
 x_{\varepsilon}).
\end{eqnarray}
 Since $u$ is a viscosity subsolution and $v$ is
a viscosity supersolution of (\ref{e1}), then we have
\begin{eqnarray*}\label{}
 p-G(t_{\varepsilon},
 x_{\varepsilon},X)\leq 0,\quad q-G(t_{\varepsilon},
 x_{\varepsilon},Y)\geq 0,
\end{eqnarray*}
and the above inequality and the subadditivity of
$G(t_{\varepsilon},
 x_{\varepsilon}, \cdot)$
yield
\begin{eqnarray*}\label{}
\dfrac{\partial\varphi(t_{\varepsilon},
 x_{\varepsilon})}{\partial t}= p-q\leq G(t_{\varepsilon},
 x_{\varepsilon},X)-G(t_{\varepsilon},
 x_{\varepsilon},Y)\leq G(t_{\varepsilon},
 x_{\varepsilon},X-Y).
\end{eqnarray*}
By the above inequality and (\ref{e3}) we have
 $$\dfrac{\partial\varphi(t_{\varepsilon},
 x_{\varepsilon})}{\partial t}\leq G(t_{\varepsilon},
 x_{\varepsilon}, X-Y)\leq G\Big(t_{\varepsilon},
 x_{\varepsilon}, \dfrac{\varepsilon^{2}}{2}(D^{2}\varphi(t_{\varepsilon},
 x_{\varepsilon}))^{2}+D^{2}\varphi(t_{\varepsilon},
 x_{\varepsilon})\Big).$$
 Letting $\varepsilon\rightarrow 0$, since
  $(t_{\varepsilon}, x_{\varepsilon},y_{\varepsilon}) \rightarrow (t_{0}, x_{0},  x_{0})
 $, as $\varepsilon\rightarrow 0$, and $G$  is continuous, we get
  $$\dfrac{\partial\varphi(t_{0},
 x_{0})}{\partial t}- G\big(t_{0},
 x_{0}, D^{2}\varphi(t_{0},
 x_{0})\big)\leq 0.$$
Therefore,  $u-v$ is a viscosity subsolution of (\ref{e1}). The
proof is complete.
\end{proof}\\

We now state and prove the main result in this paper.
\begin{theorem}\label{}
 Let (H) be satisfied and let $u_{1}, u_{2} \in C(\overline{Q})$ be
solutions of (\ref{e1}) in the strip $Q=(0,T)\times \mathbb{R}^{n}$
with $u_{1}(0,x)=u_{2}(0,x)=\varphi(x).$ If there are two positive
constants $c_{1}, c_{2}$ such that
\begin{equation}\label{8}
|u_{1}(t,x)|\leq c_{1}e^{c_{2}|x|^{2}}, |u_{2}(t,x)|\leq
c_{1}e^{c_{2}|x|^{2}}, \ \text{uniformly for } t \in [0,T],
\end{equation}
 then $u_{1}\equiv u_{2}$ in $\overline{Q}$.
\end{theorem}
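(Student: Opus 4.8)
The plan is to reduce the uniqueness statement to showing that $w := u_1 - u_2$ vanishes identically. By Lemma~\ref{2}, both $w = u_1 - u_2$ and $-w = u_2 - u_1$ are viscosity subsolutions of (\ref{e1}), with $w(0,x) = 0$ and, by (\ref{8}) and the triangle inequality, $|w(t,x)| \le 2c_1 e^{c_2|x|^2}$ uniformly in $t \in [0,T]$. So it suffices to prove the following: if $w \in C(\overline Q)$ is a viscosity subsolution of (\ref{e1}) with $w(0,\cdot) \le 0$ and $w(t,x) \le C e^{c|x|^2}$ uniformly in $t$, then $w \le 0$ on $\overline Q$ (and applying this to $-w$ as well gives $w \equiv 0$).

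First I would observe that, by condition (H), $G(t,x,\cdot)$ is uniformly elliptic, so $G(t,x,X) \le \mathcal M(X,\lambda,\Lambda)$ for suitable $0 < \lambda \le \Lambda$; hence $w$ lies in the Pucci class $\underline S(\lambda,\Lambda,0)$, which is exactly the hypothesis needed to apply Kovats' three-curves Lemma~\ref{le1}. The heart of the argument is then a standard Phragmén--Lindelöf / Tychonoff-type iteration. Fix a point $(t_*,x_*) \in \overline Q$; I want to show $w(t_*,x_*) \le 0$. Choose $t_0 > T$ and work inside a concave paraboloid shell $Q$ truncated below the vertex $(t_0,0)$, bounded by $t = 0$ and $t = t' $ with $t_* \le t' < t_0$, and laterally by the paraboloids $\rho_1 = |x|/\sqrt{t_0 - t}$ and $\rho_2 = |x|/\sqrt{t_0-t}$. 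On the inner paraboloid, $M(\rho_1)$ is controlled using $w(0,\cdot) \le 0$ on the bottom face and the quadratic bound on the lateral arc; on the outer paraboloid, $M(\rho_2) \le C e^{c \rho_2^2 (t_0 - t)} \le C e^{c\rho_2^2 t_0}$. The three-curves inequality then gives, for $\rho_1 \le \rho \le \rho_2$,
\begin{eqnarray*}
M(\rho) \le \frac{M(\rho_1)\big(\psi(\rho_2) - \psi(\rho)\big) + M(\rho_2)\big(\psi(\rho) - \psi(\rho_1)\big)}{\psi(\rho_2) - \psi(\rho_1)}.
\end{eqnarray*}

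The key point is the growth of $\psi$. In the relevant regime $\psi'(\rho) = e^{\rho^2/(4\Lambda)}\rho^{-\Lambda(n-1)/\lambda}$, so $\psi(\rho_2)$ grows like $e^{\rho_2^2/(4\Lambda)}$ as $\rho_2 \to \infty$. Meanwhile $M(\rho_2) \lesssim e^{c t_0 \rho_2^2}$. Provided $t_0$ is chosen so that $c t_0 < 1/(4\Lambda)$ — which is possible only if we first shrink the time interval, i.e. prove the result on $[0,\delta]$ for $\delta$ small with $c\delta < 1/(4\Lambda)$, then iterate over $[\delta, 2\delta], [2\delta,3\delta],\dots$ to cover all of $[0,T]$ — the ratio $M(\rho_2)\,\psi(\rho)/(\psi(\rho_2) - \psi(\rho_1))$ tends to $0$ as $\rho_2 \to +\infty$ with $\rho$ fixed. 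Letting $\rho_2 \to \infty$ kills the second term, leaving $M(\rho) \le M(\rho_1) \le 0$, and since $(t_*,x_*)$ lies on some such paraboloid of opening $2\rho^{-2}$ we conclude $w(t_*,x_*) \le 0$.

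The main obstacle I expect is the bookkeeping in this last step: one must choose the vertex time $t_0$, the truncation level $t'$, and the openings $\rho_1,\rho_2$ consistently so that every point $(t_*,x_*)$ with $t_* \le \delta$ is captured by a paraboloid sitting inside the shell $Q$, while simultaneously keeping the exponential rate $c t_0$ strictly below the Gaussian rate $1/(4\Lambda)$ produced by $\psi$. This forces the time-slicing into steps of length $\delta < 1/(4c\Lambda)$ and a finite induction on the intervals $[k\delta, (k+1)\delta]$ — at each stage the initial datum $w(k\delta,\cdot) \le 0$ comes from the previous stage, and the quadratic-growth constants $c_1, c_2$ must be tracked since the bound $|w| \le 2c_1 e^{c_2|x|^2}$ is only propagated forward, not its constants improved. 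Once this geometric setup is fixed, the estimate itself is the direct combination of uniform ellipticity (to get into the Pucci class), Lemma~\ref{le1}, and the explicit formula for $\psi'$; applying the whole argument to $-w$ finishes the proof that $u_1 \equiv u_2$.
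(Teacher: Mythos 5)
Your overall strategy is the paper's: reduce to $w=u_1-u_2$ (and $-w$) being a viscosity subsolution via Lemma~\ref{2}, place $w$ in the Pucci class $\underline S(\lambda,\Lambda,0)$ by uniform ellipticity (the paper cites \cite{CC} and gets $\underline S(\frac{\lambda}{n},\Lambda,0)$), apply Kovats' three-curves Lemma~\ref{le1} on a paraboloid shell, use $M(\rho_2)\le 2c_1e^{c_2\rho_2^2t_0}$ against $\psi(\rho_2)\sim e^{\rho_2^2/(4\Lambda)}$ to kill the outer term as $\rho_2\to\infty$ (your insistence on the strict inequality $c_2t_0<1/(4\Lambda)$ is right, and the time-slicing with steps of length $\sim 1/(4\Lambda c_2)$ followed by a finite induction is exactly what the paper does). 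Up to that point the proposal matches the paper.

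However, there is a genuine gap at the step ``$M(\rho)\le M(\rho_1)\le 0$.'' By definition $M(\rho_1)=\max\{M_1(\rho_1),M_2\}$, where $M_1(\rho_1)$ is the maximum of $w$ over the \emph{lateral arc} $\{|x|=\rho_1\sqrt{t_0-t},\ 0\le t\le t'\}$ of the inner paraboloid; this arc consists of interior points with $t>0$, where neither $w(0,\cdot)\le 0$ nor the growth bound $|w|\le 2c_1e^{c_2|x|^2}$ gives any sign information. So $M(\rho_1)\le 0$ is unjustified, and with it the conclusion $w(t_*,x_*)\le 0$. What the three-curves estimate actually yields, after letting $\rho_2\to\infty$ and then $\rho_1\to 0$, is only that the supremum of $w$ over the slab $[0,t']\times\mathbb{R}^n$ is attained on the axis $\{x=0\}$. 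One must then rule out a positive maximum at some $(t_1,0)$ with $t_1>0$, and the viscosity subsolution property alone cannot do this: the constant test function at an interior global maximum gives merely $0-G(t_1,0,0)\le 0$, i.e.\ $0\le 0$. The paper closes this case by invoking the strong maximum principle for degenerate parabolic equations (Gripenberg \cite{G}) to conclude $v\equiv v(t_1,0)$, hence $\equiv 0$ by the zero initial datum, on $[0,t_1]\times\mathbb{R}^n$, and only then iterates forward in time. Your proposal is missing this ingredient (or some substitute for it), so as written it proves only that the maximum of $w$ lives on the axis, not that $w\le 0$.
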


\begin{proof}
Since $u_{1}$ is a viscosity subsolution and $u_{2}$ is a viscosity
supersolution of (\ref{e1}), then by Lemma \ref{2} we have
$v=u_{1}-u_{2}$ is a viscosity subsolution of (\ref{e1}) with
$v(0,x)=0$. Due to \cite{CC}, we have $v\in
\underline{S}(\frac{\lambda}{n},\Lambda,0)$, where
$\lambda\leq\Lambda$ are two positive constants.

Putting $t_{0}\leq \dfrac{1}{4\Lambda c_{2}}$, we first consider $v$
in a domain $Q_{1}=[0,\frac{t_{0}}{2}]\times \mathbb{R}^{n}$. For
$\rho_{1}\leq\rho\leq\rho_{2}$, from Lemma \ref{le1} there exists a
differential function $\psi(\rho)$ depending on $n, \lambda,
\Lambda$ and $\rho$, such that
\begin{eqnarray*}\label{}
M(\rho)\leq\dfrac{M(\rho_{1})(\psi(\rho_{2})-\psi(\rho))+M(\rho_{2})
(\psi(\rho)-\psi(\rho_{1}))}{\psi(\rho_{2})-\psi(\rho_{1})}.
\end{eqnarray*}
Moreover, if $\psi'(\rho)\geq0$,  then
$\psi'(\rho)=e^{\frac{\rho^{2}}{4\Lambda}}\rho^{-\frac{n\Lambda(n-1)}{\lambda}}$;
and if $\psi'(\rho)\leq0$,  then
$\psi'(\rho)=-e^{\frac{\rho^{2}}{4\lambda}}\rho^{-\frac{\lambda(n-1)}{n\Lambda}}$.

By (\ref{8}), we know that $ |v(t, x)|\leq 2c_{1}e^{c_{2}|x|^{2}}$.
Then $M(\rho_{2})\leq2c_{1}e^{c_{2}\rho_{2}^{2}t_{0}}$. If
$\psi'\geq 0$, then we have
\begin{eqnarray*}
 \lim\limits_{\rho_{2}\rightarrow
 \infty}\frac{M(\rho_{2})}{\psi(\rho_{2})}\leq\lim\limits_{\rho_{2}\rightarrow
 \infty}\frac{2c_{1}e^{c_{2}\rho_{2}^{2}t_{0}}}{\psi(\rho_{2})}=\lim\limits_{\rho_{2}\rightarrow
 \infty}\frac{4c_{1}c_{2}\rho_{2}t_{0}e^{c_{2}\rho_{2}^{2}t_{0}}}
 {e^{\frac{\rho_{2}^{2}}{4\Lambda}}\rho^{-\frac{n\Lambda(n-1)}{\lambda}}}
 =\lim\limits_{\rho_{2}\rightarrow
 \infty}\frac{4c_{1}c_{2}t_{0}\rho_{2}^{1+\frac{n\Lambda(n-1)}{\lambda}}}
 {e^{\frac{\rho_{2}^{2}}{4\Lambda}-c_{2}\rho_{2}^{2}t_{0}}}.
\end{eqnarray*}
Since $t_{0}\leq \dfrac{1}{4\Lambda c_{2}}$, we have
\begin{eqnarray*}
 \lim\limits_{\rho_{2}\rightarrow
 \infty}\frac{M(\rho_{2})}{\psi(\rho_{2})}\leq0.
\end{eqnarray*}
 Therefore,
\begin{eqnarray*}
M(\rho)&\leq&\lim\limits_{\rho_{2}\rightarrow \infty}
\dfrac{M(\rho_{1})(\psi(\rho_{2})-\psi(\rho))+M(\rho_{2})
(\psi(\rho)-\psi(\rho_{1}))}{\psi(\rho_{2})-\psi(\rho_{1})}\\
&\leq&M(\rho_{1})+\lim\limits_{\rho_{2}\rightarrow \infty}
\dfrac{M(\rho_{1})(\psi(\rho_{1})-\psi(\rho))+M(\rho_{2})
(\psi(\rho)-\psi(\rho_{1}))}{\psi(\rho_{2})-\psi(\rho_{1})}\\
&\leq & M(\rho_{1}).
\end{eqnarray*}
Letting $\rho_{1}\rightarrow 0$, we know that the maximum value of
$v$ in $Q_{1}$ occurs on the hyperplane $x=0$. If the maximum value
of $v$ in $Q_{1}$ occurs at $(x=0, t=0)$, then $v\leq v(0,0)=0$ in
$Q_{1}$. We consider $-v$ in $Q_{1}$. By the similar argument, we
have $-v\leq -v(0,0)=0$ in $Q_{1}$. Thus,  $v=0$ in $Q_{1}$.
 If the maximum value
of $v$ in $Q_{1}$ occurs at $(x=0, t=t_{1})$, where $t_{1}\in (0,
\frac{t_{0}}{2}]$, then by the strong maximum principle in  \cite{G}
we have $v= v(0,0)=0$ in $[0,t_{1}]\times \mathbb{R}^{n}$.

Repeating the above process, using $t=t_{1}$ as the initial line and
there exists $t_{2}$ such that $t_{1}< t_{2}\leq t_{0}$,  we obtain
that $v\equiv 0$ in $Q_{2}=[t_{1}, t_{2}]\times \mathbb{R}^{n}$.
After a finite number of steps, we get $v\equiv 0$ in $Q$.
Therefore, $u_{1}\equiv u_{2}$ in $Q$.

 If $\psi'\leq 0$, by the similar argument we can get the desired result. The proof is complete.
\end{proof}\\

  We give a counterexample (see \cite{J}) to show that if (\ref{8}) is not satisfied,
  then solution of the heat equation are not unique.
\begin{example}
We consider the following heat equation:
\begin{eqnarray*}\label{}
\left\{\begin{array}{l l}
     u_{t}-u_{xx}=0,\  (t,x) \in (0,T]\times
\mathbb{R}^{n},\\
      u(0,x)=0,  \  x \in  \mathbb{R}^{n}.
         \end{array}
  \right.
\end{eqnarray*}
The above equation has many solutions. In fact, for any $\alpha>1$,
putting
\begin{equation*}\label{}
g(t)=
\begin{cases}
e^{-t^{-\alpha}},& t>0;\\
0, &otherwise,
\end{cases}
\end{equation*}
we can check that
$$u(t,x)=\sum_{k=0}^\infty\frac{g^{(k)}(t)x^{2k}}{2k!}.$$
are solutions of the above heat equation.
\end{example}

\section*{Acknowledgements}
The  author  thanks  Prof. Peng and Dr. Jia  for their helpful
discussions.  The author also thanks  the editor and anonymous
referees for their  helpful suggestions.

This work is supported by the Young Scholar Award for Doctoral
Students of the Ministry of Education of China, the Marie Curie
Initial Training Network (PITN-GA-2008-213841) and the National
Basic Research Program of China (973 Program, No. 2007CB814900).

\end{document}